
\documentclass[a4paper, draft, 12pt]{amsproc}
\usepackage{amssymb}
\usepackage{amsmath,amssymb,ulem,color}

\usepackage[hyphens]{url} \urlstyle{same}
\usepackage[dvips]{graphicx} 
\usepackage{cmdtrack} 
\usepackage{mathrsfs}                       

\theoremstyle{plain}
 \newtheorem{theorem}{Theorem}[section]
 \newtheorem{prop}{Proposition}[section]
 \newtheorem{lem}{Lemma}[section]
 
\theoremstyle{Definition}
 \newtheorem{exm}{Example}[section]
 
 \newtheorem{dfn}{Definition}[section]
\theoremstyle{remark}

 \numberwithin{equation}{section}

\renewcommand{\leq}{\leqslant}
\renewcommand{\geq}{\geqslant}

\setlength{\textwidth}{33cc} \setlength{\textheight}{48cc}

\title[Transcendental Semigroup that has Simply Connected Fatou Components]{Transcendental Semigroup that has Simply Connected Fatou Components}

\subjclass[2010]{37F10, 30D05}


\keywords{Fatou set, pre-periodic component, periodic component, wandering component, transcendental semigroup, Carleman set}

\author[B. H. Subedi]{\bfseries  Bishnu Hari Subedi}

\address{ 
Central Department of Mathematics \\ 
Institute of Science and Technology   \\ 
Tribhuvan University   \\ 
Kirtipur, Kathmandu\\
Nepal}
\email{subedi.abs@gmail.com / subedi\_bh@cdmathtu.edu.np }

\author[A. Singh]{Ajaya Singh}
\address{Central Department of Mathematics, Institute of Science and Technology, Tribhuvan University, Kirtipur, Kathmandu, Nepal }
\email{singh.ajaya1@gmail.com / singh\_a@cdmathtu.edu.np} 
\thanks{This research work of first author is supported by PhD faculty fellowship from University Grants Commission, Nepal. } 

\begin{document}

{\begin{flushleft}\baselineskip9pt\scriptsize
\end{flushleft}}
\vspace{18mm} \setcounter{page}{1} \thispagestyle{empty}

\begin{abstract}
We prove that there exists a non-trivial transcendental semigroup $S$ such that the wandering (or pre-periodic or periodic) components of Fatou set $F(S)$ has at least a simply connected domain $D$.
\end{abstract}

\maketitle
\section{Introduction}
 We denote the \textit{complex plane} by $\mathbb{C}$, \textit{extended complex plane} by $\mathbb{C_{\infty}}$ and \textit{set of integers greater than zero} by $\mathbb{N}$. 
We assume the function $f:\mathbb{C}\rightarrow\mathbb{C}$ is \textit{transcendental entire function} (TEF) unless otherwise stated. 
For any $n\in\mathbb{N}, \;\; f^{n}$ always denotes the nth \textit{iterates} of $f$.
If $f^{n}(z) = z$ for some smallest $ n \in \mathbb{N} $, then we say that $z$ is periodic point of period n. In particular, if $f (z) = z$, then $z$ is a fixed point of $f$. If $| (f^{n}(z))^{\prime} | < 1$, where  $ \prime $ represents complex differentiation of $f^{n}$ with respect to $z$, then $z$ is called attracting periodic point. 
A family $\mathscr{F} = \{f:  f\;  \text{is meromorphic on some domain} \; X \; of \; \mathbb{C}_{\infty}\}$ forms normal family if every sequence $(f)_{i\in\mathbb{N}}$  of functions contains a subsequence which converges uniformly to a finite limit or converges to $ \infty $ on every compact subset $D$ of  $X$.  
The \textit{Fatou set}\index{Fatou ! set} of $f$ denoted by $F(f)$ is the set of points $z\in \mathbb{C}$ such that sequence $(f^n)_{n\in \mathbb{N}}$ forms a normal family in some neighborhood of $z$.  That is, $ z\in F(f) $ if $ z $ has a neighborhood $ U$ on which the family $\mathscr{F}$ is normal.  By definition, Fatou set is open and may or may not be empty.  Fatou set is non-empty for every entire function with attracting periodic points. If $ U \subset F(f) $ (Fatou component), then $ f(U) $ lies in some component $ V $ of $ F(f) $ and $ V- f(U) $ is a set which contains at most one point (see for instance \cite{her}). Let $ U \subset F(f) $ (a Fatou component) such that $ f^{n}(U) $ for some $ n \in \mathbb{N} $, is contained in some component of $ F(f) $, which is usually denoted by $ U_{n} $.  A Fatou component $ U $ is called pre-periodic if there exist integers $ n >m \geq 0 $ such that $ U_{n} =  U_{m} $. In particular,  if $ U_{n} =  U_{0} = U$ ( that is, $ f^{n}(U) \subset U $) for some smallest positive integer $ n \geq 1 $, then $ U $ is called \textit{periodic Fatou component}\index{periodic! Fatou component} of period n and $\{ U_{0}, U_{1}\ldots, U_{n-1} \}$ is called the \textit{periodic cycle}\index{periodic ! cycle} of $ U $.  In the case, if $ U_{1} = f(U) \subset U $, then $ U $ is called \textit{invariant domain}\index{invariant domain}. A component of Fatou set $ F(f) $ which is not pre-periodic is called \textit{wandering domain}\index{wandering domain}.  

For the complex plane $\mathbb{C}$, let us  denote the set of all holomorphic functions of $ \mathbb{C} $ by $\text{Hol}(\mathbb{C})$. If $ f\in \text{Hol}(\mathbb{C}) $, then $ f $ is a polynomial or transcendental entire function. The set $\text{Hol}(\mathbb{C})$ forms a semigroup  with semigroup operation being the functional composition. 
 
\begin{dfn}[\textbf{Transcendental semigroup}]
Let $ A = \{f_i: i=1,2,\ldots\} \subset \text{Hol}(\mathbb{C})$ be a set of transcendental entire functions $ f_{i}: \mathbb{C}\rightarrow \mathbb{C} $. A \textit{transcendental semigroup} $S$ is a semigroup generated by the set $ A $ with semigroup operation being the functional composition. We denote this semigroup by $S = \langle A \rangle =  \langle f_i: i= 1, 2, 3,\ldots\rangle$ or simply by $S = \langle f_{1}, f_{2}, f_{3}, \cdots, f_{n}, \cdots \rangle$. 
\end{dfn}
Here, each $f \in S$ is the transcendental entire function and $S$ is closed under functional composition. Thus $f \in S$ is constructed through the composition of finite number of functions $f_{i_k},\;  (k=1, 2, 3,\ldots, m) $. That is, $f =f_{i_1}\circ f_{i_2}\circ f_{i_3}\circ \cdots\circ f_{i_m}$. 

A semigroup generated by finitely many functions $f_i, (i = 1, 2, 3,\ldots, n) $  is called \textit{finitely generated transcendental semigroup}. We write $S= \langle f_1,f_2,\ldots,f_n\rangle$.
 If $S$ is generated by only one transcendental entire function $f$, then $S$ is \textit{cyclic or trivial transcendental  semigroup}. We write $S= \langle f\rangle$. In this case each $g \in S$ can be written as $g = f^n$, where $f^n$ is the nth iterates of $f$ with itself. Note that in our study of semigroup dynamics, we say $S= \langle f\rangle$ a trivial semigroup. 
 
Based on the Fatou-Julia theory of a complex analytic function, The Fatou set and Julia set  in the settings of semigroup are defined as follows.
\begin{dfn}[\textbf{Fatou set, Julia set}]\label{2ab} 
\textit{Fatou set} of the transcendental semigroup $S$ is defined by
  \[F (S) = \{z \in \mathbb{C}: S\;\ \textrm{is normal in a neighborhood of}\;\ z\}\] 
The connected component of Fatou set $ F(S) $ is called Fatou component.
The \textit{Julia set} of $S$ is defined by $J(S) = \mathbb{C} - F(S)$.
\end{dfn} 

From the definition \ref{2ab}, it is clear that $F(S)$ is the open set and therefore, it complement $J(S)$ is closed set. Indeed, these definitions generalize the definitions of Julia set and Fatou set of the iteration of single holomorphic map. 
If $S = \langle f\rangle$, then $F(S)$ and $J(S)$  are respectively the Fatou set and Julia set in classical iteration theory of complex dynamics. In this situation we simply write: $F(f)$ and $J(f)$. 

The fundamental contrast between classical complex dynamics and semigroup dynamics appears by different algebraic structure of corresponding semigroups. In fact, non-trivial semigroup (rational or transcendental) need not be, and most often will not be abelian. However, trivial semigroup is cyclic and therefore abelian. As we discussed before, classical complex dynamics is a dynamical study of trivial (cyclic) semigroup whereas semigroup dynamics is a dynamical study of non-trivial semigroup.

Note that for any semigroup $ S $, we have
\begin{enumerate}
\item $ F(S) \subset F(h) $ all $ h \in S$ and hence $ F(S) \subset \cap_{h\in S}F(h) $.  
\item $ J(h) \subset J(S) $.
\end{enumerate} 
Since, in classical complex dynamics, Fatou set $F(f)$ may be empty. So from the above first relation, we can say that Fatou set $F(S)$ of semigroup $S$ may also be empty. 
In this paper we are interested to find a non-trivial semigroup $S$ that has non-empty Fatou set $F(S)$. Basically, we prove that there is a non-trivial transcendental semigroup that has simply connected Fatou component. 

\begin{prop}\label{ne}
There is a non trivial transcendental semigroup $ S $ such that the Fatou set $ F(S) $ has at least a simply  connected component. 
\end{prop}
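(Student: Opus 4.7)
The plan is to construct two distinct transcendental entire functions $f_{1}, f_{2}$ that share a common simply connected invariant domain $D$ on which the semigroup $S = \langle f_{1}, f_{2} \rangle$ is normal. Since the relation $F(S) \subset \bigcap_{h\in S} F(h)$ forces every element of a Fatou component of $S$ to be normal for every $h\in S$, the natural route is to arrange that $D$ lies inside the Fatou set of each $f_i$ in a way that survives arbitrary compositions. The simplest way to guarantee this is to make $D$ (say a round disk centred at an attracting fixed point common to both generators) forward invariant under both $f_{1}$ and $f_{2}$, with the images $f_{i}(D)$ compactly contained in $D$.

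The key analytic tool, signalled by the paper's keyword list, is Carleman approximation. First I would fix a small closed disk $D=\overline{\Delta}(0,r)$ and pick two distinct holomorphic contractions $g_{1}, g_{2}$ of $D$ into itself fixing $0$ (for instance $g_{i}(z)=\lambda_{i} z$ with $0<|\lambda_{i}|<1$ and $\lambda_{1}\neq \lambda_{2}$, $\lambda_{1}\neq \lambda_{2}^{k}$ for all $k$ to rule out cyclicity). Next I would choose an unbounded closed set $E\subset \mathbb{C}\setminus D$ which, together with $D$, forms a Carleman set for $\mathbb{C}$ (e.g. the complement of a union of sufficiently fat disks around integer points, or a suitable ``spiral'' escaping to infinity). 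Then I would prescribe a continuous target on $D\cup E$ that equals $g_{i}$ on $D$ and is designed on $E$ to force the resulting entire function to be transcendental (for example by prescribing arbitrarily large values on $E$, or by prescribing zeros at a sequence accumulating at $\infty$). Carleman's theorem then yields transcendental entire functions $f_{i}$ with $|f_{i}-g_{i}|$ as small as we wish on $D$. By choosing the Carleman tolerance small enough we can guarantee $f_{i}(D)\Subset D$ and keep $f_{i}$ strictly contractive on $D$.

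From here the Fatou-theoretic step is short. Every element $h\in S$ is a finite composition of the $f_{i}$, so $h(D)\Subset D$ and $\{h|_{D}:h\in S\}$ is a uniformly bounded family on $D$, hence normal by Montel's theorem. Consequently $D\subset F(S)$, and the Fatou component $U$ of $F(S)$ containing $D$ is nonempty; because $D$ is simply connected and sits inside $U$, one just has to check that $U$ itself is simply connected. This can be arranged by choosing the approximation fine enough that $f_{i}$ has no critical value outside a small neighbourhood of $0$ that it pushes into $D$, so $U$ behaves like the immediate attracting basin of the common fixed point $0$, which is known to be simply connected in the transcendental setting under these hypotheses. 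Finally, the non-triviality of $S$ follows from the mismatch of multipliers at $0$: $f_{1}'(0)\approx \lambda_{1}$ and $f_{2}'(0)\approx \lambda_{2}$, and since $\lambda_{1}$ is not a power of $\lambda_{2}$ (nor vice versa) neither generator can be an iterate of the other.

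The main obstacle is the last geometric claim: while it is routine to force $D\subset F(S)$ and to make $S$ genuinely non-cyclic, controlling the global topology of the Fatou component $U$ that contains $D$ is delicate for transcendental maps, since singular values can create extra connectivity. The work in the proof will be in choosing the Carleman data on $E$ so that the singular values of each $f_{i}$ either lie in $D$ (hence feed the attracting dynamics) or escape to infinity along $E$ without returning, thereby ruling out the topological pathologies that would prevent $U$ from being simply connected.
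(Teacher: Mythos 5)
Your proposal is essentially correct in outline but takes a genuinely different route from the paper. The paper does not construct the generators from scratch: it imports the Carleman-set constructions of Singh and of Kumar--Datt--Kumar (Lemmas \ref{ne3}, \ref{ne2}, \ref{ne1}, and their three-function extensions), which supply transcendental entire $f,g$ (or $f,g,h$) together with a domain $D$ lying in wandering, pre-periodic or periodic components of $F(f)$, $F(g)$ and of their composites; Lemmas \ref{ne7} and \ref{ne8} then promote this to every word $f^{n_k}\circ g^{n_{k-1}}\circ\cdots$, i.e.\ to every element $h$ of $S=\langle f,g\rangle$, and the proof concludes that $D$ sits in a component of $F(S)$. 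You instead prescribe the dynamics directly: two Carleman approximations of contractions of a fixed disk $D$, so that $h(D)\Subset D$ for \emph{every} $h\in S$, and Montel's theorem gives normality of the whole family $S$ on $D$ in one stroke. This is a real advantage: the paper's passage from ``$D$ lies in $F(h)$ for every $h\in S$'' to ``$D$ lies in $F(S)$'' is not justified by the inclusion $F(S)\subset\bigcap_{h\in S}F(h)$, which goes the wrong way, whereas your uniform-boundedness argument supplies exactly the equicontinuity of the full semigroup that is needed. What the paper's route buys in exchange is the finer dynamical conclusion (the domain can be placed in wandering, pre-periodic or periodic components of each $F(h)$), which an invariant attracting disk cannot reproduce in the wandering case.

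Two points in your write-up need repair. First, non-triviality: Carleman approximation does not make $0$ an exact common fixed point with exact multipliers $\lambda_i$, and even granting that, ``$\lambda_1$ is not a power of $\lambda_2$'' only rules out one generator being an iterate of the other; it does not exclude $f_1=k^m$, $f_2=k^n$ for a third entire function $k$, in which case $S$ is contained in a cyclic semigroup and $f_1^n=f_2^m$. You should either choose multiplicatively independent multipliers and argue from $f_1^n=f_2^m$, or, more simply, use one extra point of the Carleman set to force $f_1\circ f_2\neq f_2\circ f_1$. Second, you correctly identify that simple connectivity of the whole component $U$ of $F(S)$ is the delicate step, and your sketch about singular values is not yet a proof; note, however, that the paper itself only establishes that the simply connected domain $D$ lies in a component of $F(S)$ (its remark that a domain inside a simply connected domain is simply connected concerns $D$, not $U$), so on this point your proposal is no weaker than the published argument, and a complete proof for $U$ would need an additional ingredient, for instance the relation $J(S)=\overline{\bigcup_{h\in S}J(h)}$ combined with simple connectivity of the components of each $F(h)$ containing $U$.
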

Note that if the semigroup $S $ is trivial, that is, semigroup $S =\langle f \rangle$ generated by a single transcendental entire function $f$, then Bergweiler\cite{ber10} proved that the Fatou set $F(S)$ has both a simply and a multiply connected wandering domains. However, in the case of non-trivial transcendental semigroup, the proof is not so easy. The reason behind is that the dynamics of individual transcendental entire functions differ largely from the dynamics of their composites.

\section{Some Essential Lemmas}
To workout a proof of the proposition \ref{ne}, first of all we need a notion of approximation theory of entire functions. In our case, we can use the notion of Carleman set \index{Carleman set}  from which we obtain  approximation of any holomorphic map by entire functions.
\begin{dfn}[\textbf{Carleman Set}]\label{cs}
Let $ F $ be a closed subset of $ \mathbb{C} $ and $ C(F) =\{f : F\to \mathbb{C}: f\; \text{is continuous on }\; S\; \text{and analytic in the interior of}\; F^{\circ}\; \text{of}\; F \}  $. Then $ F $ is called a Carleman set (for $ \mathbb{C} $) if for any $ g \in C(F) $ and any positive continuous function $ \epsilon $ on $ F $, there exists entire function $ h $ such that $ |g(z) -h(z)| < \epsilon $ for all $ z \in F $. 
\end{dfn}
The following important characterization of Carleman set has been proved by A. Nersesjan in 1971 but we have been taken this result from \cite{gai}.
\begin{theorem}[{\cite[Theorem 4, page 157]{gai}}]\label{cs1}
Let $ F $ be proper subset of $ \mathbb{C} $. Then $ F  $ is a Carleman set for $ \mathbb{C} $ if and only if  $ F $ satisfies:
\begin{enumerate}
\item $ \mathbb{C}_{\infty} - F$ is connected;
\item $ \mathbb{C}_{\infty} - F$ is locally connected at $ \infty $;
\item for every compact subset $ K $ of  $ \mathbb{C} $, there is a neighborhood $ V $ of $ \infty $ in $ \mathbb{C}_{\infty}$ such that no component of $ F^{\circ} $ intersects both $ K $ and $ V $.
\end{enumerate}
\end{theorem}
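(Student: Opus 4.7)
The plan is to split this biconditional into necessity and sufficiency and handle them independently. For necessity, I would attack the contrapositive of each of (1)--(3) separately by exhibiting, in each case, a pair consisting of $g\in C(F)$ and a positive continuous $\epsilon$ on $F$ for which no entire $h$ satisfies $|g-h|<\epsilon$ on $F$. When (1) fails, $\mathbb{C}_\infty\setminus F$ has a bounded component $U$; choosing $z_0\in U$ and $g(z)=1/(z-z_0)$, which lies in $C(F)$ since $z_0\notin F$, Cauchy's theorem applied to $h$ along any loop $\gamma\subset F$ winding once around $z_0$ yields the residue obstruction $2\pi\le L(\gamma)\sup_\gamma\epsilon$, impossible for small $\epsilon$. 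When (2) fails, non-local-connectedness of $\mathbb{C}_\infty\setminus F$ at $\infty$ permits a continuous $g$ on $F$ whose values along two sequences escaping to $\infty$ through distinct ``ends'' of $\mathbb{C}_\infty\setminus F$ disagree in the limit, while any entire $h$ is forced to equalize those limits; this is the classical Arakelyan-style obstruction. When (3) fails, some component $\Omega$ of $F^\circ$ meets both a compact $K$ and every neighborhood of $\infty$; picking $g$ analytic on $\Omega$ with prescribed nonzero size on $\Omega\cap K$, and an $\epsilon$ decaying to $0$ along $\Omega$ toward $\infty$, forces $h-g\to 0$ at $\infty$ within $\Omega$, and a Phragm\'en--Lindel\"of argument in the channel $\Omega$ then forces $h\equiv g$ there, contradicting the size specification on $K$.

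For sufficiency, the plan is to exhaust $\mathbb{C}$ by compacta $K_1\subset K_2\subset\cdots$ with $K_n$ in the interior of $K_{n+1}$, set $F_n=F\cap K_n$, combine uniform approximation on each $F_n$ with a telescoping construction, and use (3) to guarantee the prescribed pointwise bound $\epsilon$ is respected. Conditions (1) and (2) are exactly the hypotheses of Arakelyan's theorem on uniform entire approximation for relatively closed subsets of $\mathbb{C}$; applied to $F$ with any fixed tolerance $\delta_n>0$, they furnish entire $h_n$ with $|g-h_n|<\delta_n$ on $F_n$. I would then set $h=h_1+\sum_{n\ge 1}(h_{n+1}-h_n)$, observe that each difference $h_{n+1}-h_n$ is entire and bounded by $2\delta_n$ on $F_n$, and tune $\delta_n$ to decay faster than $\inf\{\epsilon(z):z\in F\cap\overline{K_{n+1}\setminus K_{n-1}}\}$; the resulting series converges locally uniformly to an entire $h$ with $|g-h|<\epsilon$ on $F$.

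The principal difficulty is the sufficiency direction, and specifically the reason (3) is indispensable for \emph{tangential} approximation even though (1)--(2) already suffice for uniform approximation. On any component of $F^\circ$ reaching both a compact $K$ and a neighborhood $V$ of $\infty$, Phragm\'en--Lindel\"of would drag the error at $K$ down to the size of $\epsilon$ near $\infty$, so a flexible pointwise $\epsilon(z)$ decaying near $\infty$ could not be prescribed independently of its values on $K$; condition (3) is the precise bar against this pathology. The delicate part of the argument is therefore to verify that, modulo (3), the telescoping with carefully chosen $\delta_n$ actually converts a sequence of uniform Arakelyan approximants on $\{F_n\}$ into a single entire function respecting the pointwise bound $\epsilon(z)$ throughout $F$, rather than merely a compact-uniform bound; once this bookkeeping is organized, the remainder of the proof is structural.
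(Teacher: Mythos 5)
A preliminary point of comparison: the paper does not prove this statement at all --- it is quoted as Nersesjan's characterization of Carleman sets directly from Gaier's book, so there is no internal proof to measure your attempt against; you are effectively being asked to reprove a deep theorem of approximation theory. Judged on its own terms, your sketch has a genuine gap in the sufficiency direction. Arakelyan's theorem (whose hypotheses are exactly (1) and (2)) produces entire $h_n$ with $|g-h_n|<\delta_n$ on $F$, but your telescoping sum $h_1+\sum_{n\ge 1}(h_{n+1}-h_n)$ has partial sums equal to $h_N$, so its convergence is just the locally uniform convergence of the $h_n$ on $\mathbb{C}$; the bound $|h_{n+1}-h_n|\le 2\delta_n$ holds only on $F$ (or on $F_n$), not on compact subsets of the plane, so nothing forces the series to converge off $F$, let alone to an entire limit. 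More tellingly, condition (3) is never actually invoked anywhere in your construction: if the telescoping worked as written, it would show that (1) and (2) alone imply tangential (Carleman) approximation, which is false --- that is precisely why (3) appears in the statement. The passage from uniform to tangential approximation is the hard content of Nersesjan's proof and requires localization tools (Roth's fusion lemma, or a decomposition of $F$ adapted to (3)) that your outline acknowledges as ``the delicate part'' but does not supply.

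The necessity half is closer to correct but also has soft spots. For (1), a rectifiable loop $\gamma\subset F$ winding once around $z_0$ need not exist inside $F$; the standard fix is to apply the maximum principle to $(z-z_0)h(z)-1$ on the bounded complementary component $U$, using $\partial U\subset F$, which yields $1\le \epsilon\cdot\operatorname{diam}(U)$ and hence a contradiction for a small constant $\epsilon$. For (2) you only gesture at ``the classical Arakelyan-style obstruction,'' which is the substance of that case, and for (3) the Phragm\'en--Lindel\"of step cannot ``force $h\equiv g$'' --- an inequality $|h-g|<\epsilon$ never forces equality; the actual argument constructs a specific $g$ together with an $\epsilon$ decaying rapidly along the long island and derives a quantitative contradiction with the prescribed size of $g$ on $K$. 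So the overall architecture (necessity via three counterexamples, sufficiency via exhaustion plus uniform approximation) has the right shape, but the two places where the theorem's real content lives --- the necessity of (2) and (3), and the tangential upgrade in sufficiency --- are not established.
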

It is well known in classical complex analysis that the space $ \mathbb{C}_{\infty} - F$ is connected if and only if each component $ Z $ of open set $ \mathbb{C} - F$ is unbounded. This fact together with above
 theorem \ref{cs1} will be a nice tool whether a  set  is a Carleman set for $ \mathbb{C} $. The sets given in the following examples are Carlemen sets for $ \mathbb{C} $.

\begin{exm}[{\cite[Example-page 133]{gai}}]\label{cse}
The set $ E = \{z\in \mathbb{C} : |z| =1, \text{Re}z > 0 \} \cup \{z =x: x>1\} \cup \big(\bigcup_{n =3}^{\infty} \{z =r e^{i\theta}: r>1, \theta = \pi/n \}\big)$ is a Carleman set for $ \mathbb{C} $. 
\end{exm}

\begin{exm}[{\cite[Set S, page-131]{singh}}]\label{cs3}
The set $ E =G_{0} \cup \big(\bigcup_{k =1}^{\infty}(G_{k}\cup B_{K} \cup L_{k}\cup M_{k})\big)$, where $ G_{0} = \{z\in \mathbb{C}:|z-2| \leq 1\} $;
 \begin{eqnarray*}
 G_{k} &= &\{z\in \mathbb{C} : |z -(4k +2)| \leq 1 \} \cup \{z \in \mathbb{C} : \text{Re}z = 4k +2,\;  \text{Im}z \geq 1 \}  \\ && \cup \{z \in \mathbb{C} : \text{Re}z = 4k +2, \;  \text{Im}z \leq -1 \},\; \; (k =1,2,3,\ldots);
\end{eqnarray*} 
 \begin{eqnarray*}
 B_{k} & = &\{z\in \mathbb{C} : |z +(4k +2)| \leq 1 \} \cup \{z \in \mathbb{C} : \text{Re}z = -(4k +2),\;  \text{Im}z \geq 1 \} \cup \\ && \{z \in \mathbb{C} : \text{Re}z = -(4k +2), \;  \text{Im}z \leq -1 \},\; \; (k =1,2,3,\ldots);
\end{eqnarray*}
 $$
 L_{k} = \{z \in \mathbb{C} : \text{Re}z = 4k \},\; \; (k =1,2,3,\ldots); 
 $$
 and 
 $$  
 M_{k} = \{z \in \mathbb{C} : \text{Re}z = -4k \},\; \; (k =1,2,3,\ldots)
 $$ 
 is a Carleman set for $ \mathbb{C} $ by the theorem \ref{cs1} . 
 \end{exm}
From the help of the Carleman set of the example \ref{cs3}, A.P. Singh {\cite[Theorem 2]{singh}} proved the following result.
\begin{lem}\label{ne3}
There are transcendental entire functions $ f $ and $ g $ such that there exists a domain which lies in the wandering component of the $ F(f), F(g),  F(f\circ g) $ and $ F(g\circ f) $.
\end{lem}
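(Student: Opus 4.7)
The plan is to apply the Carleman approximation theorem (Theorem \ref{cs1}) to the explicit set $E$ from Example \ref{cs3}, constructing $f$ and $g$ simultaneously as entire approximants to two target functions $g_0, h_0 \in C(E)$ that are designed to move disks to disks in a prescribed combinatorial pattern. On each closed disk sitting inside $G_0$, $G_k$, or $B_k$, I would prescribe $g_0$ and $h_0$ to be affine maps sending that disk into the interior of another disk in the collection. On the vertical half-lines attached to the disks and on the straight lines $L_k, M_k$, I would prescribe both $g_0$ and $h_0$ to be functions that grow without bound as $|\mathrm{Im}\, z|\to\infty$. Since $E$ is a Carleman set, Theorem \ref{cs1} produces transcendental entire functions $f$ and $g$ with $|f-g_0|$ and $|g-h_0|$ smaller than any preassigned positive continuous $\epsilon$ on $E$.

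The disk-to-disk schedule is the combinatorial heart of the argument. I would choose the targets so that, starting from a small compact disk $D$ contained in $G_0$, the forward orbits of $D$ under each of $f$, $g$, $f\circ g$, and $g\circ f$ visit pairwise distinct members of the collection $\{G_k,B_k\}_{k\geq 1}$, with the indices escaping to infinity. A workable scheme is to make $g_0$ act by $G_k\mapsto G_{k+1}$ and $B_k\mapsto B_{k+1}$ while $h_0$ acts by $G_k\mapsto B_{2k+1}$ and $B_k\mapsto G_{2k}$; verifying that the four resulting index sequences starting at $k=0$ remain distinct at every step is a finite combinatorial check. With $\epsilon$ small enough and $D$ shrunk accordingly, the actual entire functions $f,g$ reproduce this schedule up to a controllable error, so $D$ has a forward orbit in each of the four dynamical systems whose iterates lie in a sequence of pairwise disjoint disks.

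The rapid growth of $g_0,h_0$ on the vertical rays forces the same growth for $f,g$ (and hence for the two composites) there, so Montel's theorem places those rays inside the Julia sets of $f, g, f\circ g, g\circ f$. This traps the Fatou component of each map that contains $D$ inside a bounded region and separates it from the Fatou components containing later iterates of $D$. Hence the components of $F(f)$, $F(g)$, $F(f\circ g)$, $F(g\circ f)$ that contain $D$ cannot be pre-periodic and must be wandering, yielding the common domain claimed in the lemma. The principal obstacle I anticipate is the simultaneous quantitative control of all four orbits: one must choose $\epsilon$ and the initial disk $D$ so that no approximation error propagates catastrophically after composition, which requires bounding the distortion of $f$ and $g$ on every target disk visited by any of the four schedules, while also ensuring that the prescribed $g_0,h_0$ can be chosen continuous on $E$ and holomorphic on its interior.
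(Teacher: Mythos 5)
Your overall architecture coincides with that of the proof the paper actually relies on: Lemma \ref{ne3} is not proved here but quoted from Singh's Theorem 2, and Singh's argument is precisely a Carleman approximation on the set of Example \ref{cs3}, with affine disk-to-disk targets on the $G_k$, $B_k$ and a schedule making the orbit of a small disk $D\subset G_0$ visit pairwise distinct disks under $f$, $g$, $f\circ g$ and $g\circ f$. Your schedule does this (within each of the four orbits the indices strictly increase, so the visited disks are pairwise distinct), and your worry about error propagation under composition is minor: the Carleman tolerance $\epsilon$ may be any positive continuous function, so one prescribes each disk to map onto a concentric subdisk of its target and takes $\epsilon$ smaller than the slack on every component, with no shrinking of $D$ needed.

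The genuine gap is the separation step. You assert that prescribing unbounded growth of $g_0,h_0$ on the lines and half-lines forces, ``by Montel's theorem,'' that these rays lie in the Julia sets, and hence that the components containing distinct disks are distinct. Growth alone does not give non-normality: a sequence of iterates converging to $\infty$ locally uniformly is perfectly normal (this is exactly what happens in Baker domains and in multiply connected wandering domains, where every orbit escapes), and Montel's theorem is a sufficient condition for normality, not a tool for proving its failure. Since the orbits of the disks themselves also tend to $\infty$, nothing in your prescription rules out the lines, half-lines and all the disks lying in a single unbounded Fatou component; in that case the component containing $D$ would be invariant rather than wandering and the conclusion would fail. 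What the cited construction actually supplies, and what you need, is a prescription on the lines whose orbit behaviour is genuinely incompatible with normality there: for instance escape so rapid that it contradicts the standard comparability of $\log|f^n|$ at two points of one Fatou component (a hyperbolic-metric/Harnack-type estimate), or orbits from the lines admitting two different subsequential limits in every neighbourhood of a point of the line. Repairing this step --- together with the routine checks that $g_0,h_0$ are continuous at the junctions of the disks with their attached half-lines and holomorphic on the open disks, and that the approximants are transcendental --- would make your argument complete and essentially identical to the quoted proof.
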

In fact, A. P. Singh \cite{singh} also had proved other results regarding the dynamics of two individual functions and their composites (see for instance {\cite[Theorem 1, Theorem 3 and Theorem 4] {singh}}) which are also stricly based on the Carleman set of example \ref{cs3}. Dinesh Kumar, Gopal Datt and Sanjay Kumar  extended  these result of A.P. Singh in {\cite[Theorem 2.1 to Theorem 2.15]{kum8}}. For our purpose, we cite the following two results from \cite{kum8}. 

\begin{lem}[\textbf{Theorem 2.2}] \label{ne2}
There are TEFs $ f $ and $ g $ such that there exists infinite number of domains which lie in the wandering component of the $ F(f),  F(g)$, $ F(f\circ g) $ and $ F(g\circ f) $.
\end{lem}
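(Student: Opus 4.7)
The plan is to refine the construction behind Lemma \ref{ne3} so that, using the same Carleman set $E$ of Example \ref{cs3}, every one of the infinitely many disks $G_k$ and $B_k$ appearing in $E$ lies in a wandering Fatou component of each of $f$, $g$, $f\circ g$, and $g\circ f$. The strategy is to arrange a single pair of entire functions so that the whole family of disks participates in an infinite escape to infinity under all four maps simultaneously.

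First I would define continuous functions $\phi,\psi\colon E\to\mathbb{C}$, analytic on the interior $E^\circ=G_0^\circ\cup\bigcup_{k\geq 1}(G_k^\circ\cup B_k^\circ)$, by the prescriptions
\[
\phi\equiv 4(k+1)+2 \text{ on } G_k,\quad \phi\equiv -(4(k+1)+2) \text{ on } B_k,\quad \phi\equiv 0 \text{ on every } L_k \text{ and } M_k,
\]
\[
\psi\equiv -(4(k+1)+2) \text{ on } G_k,\quad \psi\equiv 4(k+1)+2 \text{ on } B_k,\quad \psi\equiv 0 \text{ on every } L_k \text{ and } M_k.
\]
Since the connected components of $E$ are pairwise disjoint in $\mathbb{C}$, continuity on $E$ is automatic, and constancy on each component of $E^\circ$ makes $\phi,\psi$ analytic there.

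By Theorem \ref{cs1} the set $E$ is a Carleman set, so choosing a positive continuous error $\epsilon$ on $E$ with $\epsilon(z)<1/2$ everywhere, Definition \ref{cs} produces entire functions $f,g$ with $|f-\phi|<\epsilon$ and $|g-\psi|<\epsilon$ on $E$. The construction then forces, for every $k\geq 0$,
\[
f(G_k)\subset G_{k+1},\quad g(G_k)\subset B_{k+1},\quad f(B_k)\subset B_{k+1},\quad g(B_k)\subset G_{k+1},
\]
and by composition $(f\circ g)(G_k)\subset G_{k+2}$, $(g\circ f)(G_k)\subset B_{k+2}$, with analogous relations starting from $B_k$. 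Iterating, for each $k$ the images of $G_k^\circ$ under $f^n$, $g^n$, $(f\circ g)^n$, and $(g\circ f)^n$ lie in a sequence of pairwise disjoint radius-$1$ disks escaping to $\infty$; each of these iterate families therefore omits an entire open half-plane, so Montel's theorem places $G_k^\circ$ in the Fatou set of each of the four maps. The escape along pairwise disjoint disks prevents the enclosing Fatou component from being pre-periodic, so $G_k^\circ$ lies in a wandering component of each Fatou set. Letting $k$ range over $\mathbb{N}$ exhibits the required infinite family.

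The main obstacle will be to guarantee that a single global choice of $\epsilon$ keeps the inclusions $f(G_k)\subset G_{k+1}$ (and their cousins) valid \emph{uniformly in $k$}, even as the disks drift off to infinity. Theorem \ref{cs1} permits \emph{any} positive continuous $\epsilon$, so demanding $\epsilon<1/2$ throughout $E$ suffices: each $f(G_k)$ then lies within distance $1/2$ of the point $4(k+1)+2$, comfortably inside $G_{k+1}^\circ$. A subsidiary technicality, already handled in Lemma \ref{ne3}, is to rule out that the enclosing Fatou component is a Baker domain absorbing several disks at once: this is excluded because joining distant $G_k$ inside one Fatou component would force a path through some line $L_j$ or $M_j$, on which $\phi,\psi$ are identically zero so that the iterates of such a path fall near $0$ rather than into any $G_j$, so the lines belong to the Julia set and genuinely separate the wandering components.
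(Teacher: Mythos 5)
You should note at the outset that the paper itself gives no proof of this lemma: it is imported verbatim as Theorem 2.2 of \cite{kum8}, whose proof rests on A. P. Singh's Carleman-set construction (Example \ref{cs3}). Your attempt follows the right general template (approximate, on the Carleman set $E$, piecewise-constant prescriptions that push every disk $G_k$, $B_k$ outward under $f$, $g$ and hence under $f\circ g$, $g\circ f$), and the normality part is essentially fine once minor slips are fixed ($(f\circ g)(G_k)\subset B_{k+2}$, not $G_{k+2}$, with your prescriptions; and the $g$-iterates alternate between the two half-planes, so rather than a common omitted half-plane one should simply observe that the iterates tend to $\infty$ uniformly on each $G_k^\circ$). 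The genuine gap is in the wandering claim. That the orbit of $G_k^\circ$ runs through pairwise disjoint disks escaping to $\infty$ does not prevent the surrounding Fatou component from being periodic: a Baker domain is exactly a periodic component on which the iterates tend to $\infty$, and nothing you have said excludes one enormous invariant component containing all the disks $G_j^\circ$ at once. What must be proved is that distinct disks lie in distinct Fatou components of each of the four maps; wandering then follows, since $U_n=U_m$ with $n>m$ would force two different disks into one component.

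Your proposed fix --- that the lines $L_j$, $M_j$ lie in the Julia set because $\phi\equiv\psi\equiv 0$ there --- does not follow. Carleman approximation controls $f$ and $g$ only on $E$; the disk $D(0,\tfrac12)$ into which the lines are mapped is disjoint from $E$, so you know nothing about second and later iterates of points on the lines, and the lines could perfectly well lie in the Fatou set. The standard repair, and the one actually used in \cite{singh} and in the proof of Theorem 2.2 of \cite{kum8}, exploits the component $G_0$ that your prescription leaves idle: set $\phi\equiv 2$ and $\psi\equiv 2$ on $G_0$ and on every $L_j$ and $M_j$ (and keep your prescriptions on $G_k$, $B_k$ for $k\geq 1$). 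Then $f(G_0)\cup g(G_0)\subset D(2,\tfrac12)\subset G_0^{\circ}$, so each of $f$, $g$, $f\circ g$, $g\circ f$ has an attracting fixed point whose basin contains $G_0^{\circ}$, and every line $L_j$, $M_j$ is mapped into $G_0$ after one step, hence lies in that basin. A component containing some $G_k^{\circ}$, $k\geq 1$, has iterates tending to $\infty$, so it is not a basin component and therefore cannot meet any line; being connected and avoiding all the vertical lines $\mathrm{Re}\,z=\pm 4j$, it is trapped in a single vertical strip, so distinct disks do lie in distinct components, and all four maps have infinitely many wandering components each containing some $G_k^{\circ}$ (or $B_k^{\circ}$). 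With that device inserted, your argument becomes essentially the cited one; without it, the central assertion is unsupported.
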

\begin{lem} [\textbf{Theorem 2.13}]\label{ne1}
There are TEFs $ f $ and $ g $ such that there exist  infinite number of domains which lie in the pre-periodic component of the $ F(f), F(g)$, $F(f\circ g) $ and $ F(g\circ f) $.
\end{lem}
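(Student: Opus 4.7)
The plan is to adapt the Carleman approximation scheme of A.\,P. Singh and Kumar--Datt--Kumar in a way that converts ``wandering'' into ``pre-periodic''. I would reuse the Carleman set $E = G_{0} \cup \bigcup_{k\ge 1}(G_{k}\cup B_{k}\cup L_{k}\cup M_{k})$ of Example~\ref{cs3}, whose three defining properties (connectedness, local connectedness at $\infty$, and the separation property for compact sets) are already verified there. By Theorem~\ref{cs1}, any continuous function on $E$ that is analytic on $E^{\circ}$ can be approximated uniformly by an entire function with arbitrarily small positive continuous error $\epsilon(z)$.

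For the first map, define $\varphi:E\to\mathbb{C}$ by setting $\varphi\equiv 2$ on $G_{0}$ (the center of $G_{0}$), $\varphi\equiv 4(k-1)+2$ on each $G_{k}$ (the center of $G_{k-1}$) for $k\ge 1$, and $\varphi\equiv 2$ on every $B_{k}$; on the vertical half-lines $L_{k}$ and $M_{k}$ prescribe $\varphi(z)$ to grow rapidly (say $|\varphi(z)|\to\infty$ along each line) so that the lines end up in the Julia set. For the second map, define $\psi$ by $\psi\equiv -(4k+2)$ on $G_{k}$ (the center of $B_{k}$) and $\psi\equiv -(4(k-1)+2)$ on $B_{k}$ for $k\ge 1$, $\psi\equiv -2$ on $G_{0}$, and again let $\psi$ grow on $L_{k},M_{k}$. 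Applying Theorem~\ref{cs1} twice produces transcendental entire functions $f,g$ with $|f-\varphi|<\epsilon$ and $|g-\psi|<\epsilon$ on $E$. Choosing $\epsilon$ small enough (e.g., smaller than the distance from the center of each target disk to its boundary) forces $f(G_{k})\subset G_{k-1}$ and $f(B_{k})\subset G_{0}$ for $k\ge 1$, with $f(G_{0})\subset G_{0}$ containing an attracting (indeed near-constant) fixed point close to $2$; analogously $g$ shuttles each disk toward $B_{0}$-type periodic behaviour centred at $-2$, and both $f\circ g$ and $g\circ f$ transport each $G_{k},B_{k}$ in finitely many steps into the disk $G_{0}$, inside which the composition has an attracting fixed point.

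From this I would conclude that for each of the four maps $h\in\{f,g,f\circ g,g\circ f\}$ the disk $G_{k}$ (resp.\ $B_{k}$) lies in a Fatou component $U_{k}^{h}$ of $h$, and that $h^{n}(U_{k}^{h})$ lands in the invariant component containing $G_{0}$ (or its analogue) after finitely many steps, so $U_{k}^{h}$ is pre-periodic. Because the vertical lines $L_{k},M_{k}$ carry $|h|$ to infinity and hence lie in $J(h)$, they separate the plane so that the components $U_{k}^{h}$ for distinct $k$ are pairwise distinct. This gives infinitely many pre-periodic Fatou components of $h$ containing a disk from $E$, which is exactly the conclusion of Lemma~\ref{ne1}.

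The main obstacle is the simultaneous requirement for all four of $f,g,f\circ g,g\circ f$: the target values chosen for $\varphi$ on a disk $D$ must already lie in the interior of some $G_{j}$ or $B_{j}$ where $\psi$ is also prescribed, and vice versa, so that the two approximation errors compose controllably. The interlocking choice above (centres of $G_{j}$ for $\varphi$, centres of $B_{j}$ for $\psi$, all in $E^{\circ}$) is designed to make this automatic, but verifying that the $\epsilon$-perturbation does not push an image point out of the target disk at any stage of the composition — and that the near-constant behaviour on each disk is preserved under composition with entire perturbations — is the delicate quantitative step; it is handled by choosing $\epsilon(z)$ to decay fast enough on $E$ that $f\circ g$ and $g\circ f$ still land strictly inside the relevant target disks, exactly as in Theorem~2.13 of \cite{kum8}.
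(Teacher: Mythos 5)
This lemma is not proved in the paper at all: it is quoted verbatim from Kumar--Datt--Kumar \cite[Theorem~2.13]{kum8}, and the paper only records that the proof there is based on the Carleman set of Example~\ref{cs3}. Your overall strategy --- prescribe a piecewise-constant target map on the components of $E$, invoke Carleman approximation, and read off the dynamics of $f$, $g$ and their composites from the induced transition scheme on the disks --- is therefore the right family of techniques, and your treatment of $f$ alone is sound: with $\varphi\equiv 2$ on $G_{0}$ and on every $B_{k}$, and $\varphi\equiv 4(k-1)+2$ on $G_{k}$, every disk cascades into $G_{0}$, which is invariant with an attracting fixed point near $2$, so the components containing $G_{k}$, $B_{k}$ are pre-periodic for $f$ once they are shown to be pairwise distinct.

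The construction of $g$, however, has a genuine gap that propagates to $f\circ g$ and $g\circ f$. You set $\psi\equiv -2$ on $G_{0}$ and on $B_{1}$, but $-2$ is not the centre of any disk of $E$: the family of Example~\ref{cs3} has no $B_{0}$, and the only disk near the origin is $G_{0}$, centred at $+2$. Consequently $g(G_{0})$ and $g(B_{1})$ land in a neighbourhood of $-2$, which lies outside the Carleman set, where the approximation gives no information whatsoever about $g$. Since under your scheme every disk is eventually driven (by $g$, and also by $f\circ g$ and $g\circ f$, e.g.\ $(g\circ f)(B_{k-1})\subset g(G_{0})$) into this uncontrolled neighbourhood, you cannot conclude that the disks lie in $F(g)$ at all, let alone in pre-periodic components, and you cannot exhibit the periodic component into which they are supposed to fall. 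The fix is exactly the ``interlocking'' principle you state but do not enforce at the terminal step: every prescribed value of $\psi$ must be the centre of a disk of $E$, and the induced disk-to-disk transition map must eventually enter a cycle of disks of $E$ for each of the four maps (for instance $\psi\equiv 2$ on $G_{0}$ and on $B_{1}$, or $G_{0}\mapsto B_{1}\mapsto G_{0}$). Separately, the assertion that $|h|\to\infty$ on $L_{k}, M_{k}$ ``hence'' $L_{k}, M_{k}\subset J(h)$ is not a one-step inference: largeness of the first image does not place a point in the Julia set, nor even outside the attracting basin. You need to control the entire forward orbit of the lines (or at least show it cannot converge to the attracting fixed point), which again forces their images to remain in the region where the approximation is in effect; this is the other quantitative point that has to be built into the choice of $\varphi$, $\psi$ and $\epsilon$ rather than asserted.
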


We extended above  lemmas \ref{ne3}. \ref{ne1} and \ref{ne2} in {\cite[Theorem 1.1]{sub1}}{\cite[Theorem 1.1]{sub2}} and {\cite[Theorem 1.1]{sub3}} to the following results:
\begin{lem}\label{ne4}
There are transcendental entire functions $ f $, $ g $ and $ h $ such that there exist  infinite number of domains which lie in the wandering component of the $ F(f), F(g),  F(h),  F(f\circ g),  F(g\circ f),  F(f\circ h),  F(g\circ h),  F(h\circ f), F(h\circ g) , F(f\circ g \circ h), F(f\circ h \circ g), F(g\circ f \circ h), F(g\circ h \circ f), F(h\circ f \circ g)$ and $F(h\circ g \circ f)$. 
\end{lem}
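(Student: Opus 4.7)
The plan is to follow the Carleman-set approximation strategy underlying the proofs of Lemmas \ref{ne3}, \ref{ne2} and \ref{ne1}, but to enlarge the underlying closed set so that three distinct entire functions can be tailored simultaneously. First I would build a closed set $E \subset \mathbb{C}$ consisting of infinitely many pairwise disjoint closed disks $D_n$ together with enough vertical (or radial) line segments joining them to infinity so that the three hypotheses of Theorem \ref{cs1} hold: $\mathbb{C}_\infty \setminus E$ is connected, is locally connected at $\infty$, and no component of $E^\circ$ meets both a prescribed compact set and a neighbourhood of $\infty$. A natural arrangement is to place the disks along three rays emanating from the origin, in the spirit of Example \ref{cs3}, so that each ray can serve as the \emph{target family} for one of the three functions to be constructed.

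Next I would define continuous functions $\phi_1, \phi_2, \phi_3 \in C(E)$ so that on each disk $D_n$ the function $\phi_i$ takes a constant value equal to the centre of another disk $D_{\sigma_i(n)}$, where $\sigma_1, \sigma_2, \sigma_3$ are shift-like maps on the index set chosen so that every word $\sigma_{i_1} \circ \cdots \circ \sigma_{i_m}$ of length $m \in \{1,2,3\}$ drawn from $\{\sigma_1,\sigma_2,\sigma_3\}$ has no periodic orbit and in fact satisfies $\sigma_{i_1} \circ \cdots \circ \sigma_{i_m}(n) \to \infty$ as $n \to \infty$. The values of $\phi_i$ on the connecting segments are interpolated freely to give continuous extensions. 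Applying Theorem \ref{cs1} to each $\phi_i$ with a sufficiently small positive continuous tolerance $\epsilon(z)$ then produces three transcendental entire functions $f, g, h$ that agree with $\phi_1, \phi_2, \phi_3$ respectively to within $\epsilon$ on $E$.

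The final step is a standard normal-family argument. Because on each $D_n$ the functions $f, g, h$ are uniformly close to constants whose values lie deep inside a further disk $D_{\sigma_i(n)}$, any one of the fifteen maps listed in the statement sends $D_n$ into some subsequent disk, and the same is true of every iterate of any of these maps. The family of iterates is therefore uniformly bounded on a neighbourhood of $D_n$, hence normal by Montel's theorem, so $D_n$ is contained in a Fatou component of every one of $F(f), F(g), F(h)$ and the twelve listed composites. The non-return property of the shift maps $\sigma_i$ guarantees that these Fatou components are wandering for each of the fifteen functions, yielding infinitely many such domains as $n$ ranges over $\mathbb{N}$.

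The main obstacle is the combinatorial and geometric design of the Carleman set together with the shift maps $\sigma_1, \sigma_2, \sigma_3$: each of the fifteen compositions (of lengths one, two and three) must act as a genuine wandering shift on a common infinite family of disks, and yet there must be enough open space in $\mathbb{C} \setminus E$ to maintain the connectedness and local-connectedness-at-$\infty$ hypotheses of Theorem \ref{cs1}. Once this combined geometric-dynamical scheme is in place, the Carleman approximation and the ensuing normal-family verification proceed along the lines already developed in \cite{singh} and \cite{kum8}.
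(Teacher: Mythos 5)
You have correctly identified the intended route: the paper itself gives no proof of Lemma \ref{ne4} (it is quoted from \cite{sub1}), and the construction behind it is exactly the Carleman-set approximation scheme of \cite{singh} and \cite{kum8} that you sketch --- disks arranged along rays, prescribed constant values on the disks realizing shift-like index maps, and approximation on a set satisfying Theorem \ref{cs1}. But there is a genuine gap at the decisive step, namely the passage from ``the orbit of $D_n$ under each of the fifteen maps visits pairwise distinct disks tending to $\infty$'' to ``the Fatou component containing $D_n$ is wandering.'' Locally uniform escape to $\infty$ is perfectly compatible with normality, so a priori \emph{all} of the disks in one orbit (indeed all of the disks) could lie in a single unbounded invariant component, e.g.\ a Baker domain, of one of the fifteen composites; non-returning combinatorics of $\sigma_1,\sigma_2,\sigma_3$ alone does not exclude this. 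In the constructions you are emulating, this is precisely the role of the parts of the Carleman set that you propose to ``interpolate freely'': in Example \ref{cs3} the lines $L_k, M_k$ carry prescribed values (together with, say, an attracting fixed point manufactured inside $G_0$) chosen so that if a Fatou component $U$ containing one disk reached another disk, a path in $U$ would have to cross a line on which the map is forced into a bounded attracting region, contradicting the fact that the iterates tend to $\infty$ locally uniformly on $U$. Your proof needs an analogous separating mechanism that works \emph{simultaneously} for all fifteen maps $f, g, h$ and their listed composites; without prescribing $\phi_1,\phi_2,\phi_3$ on the connecting segments and carrying out that argument, the assertion that the components are wandering is unsupported.

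Two smaller repairs: the family of iterates on $D_n$ is not ``uniformly bounded'' (the target disks escape to $\infty$), so Montel's theorem in the bounded form does not apply; normality follows instead because all iterates map $D_n$ into $\bigcup_m D_m$, hence omit an open set (Montel--Carath\'eodory), or equivalently because they converge locally uniformly to $\infty$. Also, the approximation step is furnished by the definition of a Carleman set (Definition \ref{cs}); Theorem \ref{cs1} is only the criterion certifying that your closed set $E$ is Carleman, and one should add the routine remark that the approximants are transcendental (e.g.\ they are nonconstant yet within $\epsilon$ of a constant on an unbounded line, which no polynomial can be).
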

 
\begin{lem}\label{ne6}
There are transcendental entire functions $ f $, $ g $ and $ h $ such that there exist  infinite number of domains which lie in the pre-periodic component of the $ F(f), F(g), F(h), F(f\circ g), F(g\circ f),  F(f\circ h),  F(g\circ h),  F(h\circ f),  F(h\circ g) ,  F(f\circ g \circ h), F(f\circ h \circ g), F(g\circ f \circ h), F(g\circ h \circ f), F(h\circ f \circ g)$ and $F(h\circ g \circ f)$. 
\end{lem}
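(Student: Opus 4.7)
The plan is to apply the Carleman/Nersesjan construction used in Lemmas \ref{ne3}--\ref{ne4}, but with three entire functions produced simultaneously, arranging a common invariant disc into which every disc of the Carleman set is mapped by each of $f$, $g$, $h$. I take the Carleman set $E$ of Example \ref{cs3}, with its disc components inside $G_0, G_1, G_2, \ldots$ (centered at $2, 6, 10, \ldots$) and $B_1, B_2, \ldots$ (centered at $-6, -10, \ldots$), together with the vertical half-lines attached to each disc and the barrier lines $L_k, M_k$.

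First I define, for each target function, a continuous map $\phi_f, \phi_g, \phi_h \colon E \to \mathbb{C}$ that is holomorphic on the interior $E^{\circ}$, takes the constant value $2$ on every closed disc in every $G_k$ and $B_k$, and is extended on the vertical half-lines and on $L_k, M_k$ so as to grow rapidly at infinity along each ray. By Theorem \ref{cs1}, there exist transcendental entire functions $f, g, h$ with $|f-\phi_f|, |g-\phi_g|, |h-\phi_h| < 1/3$ on $E$. In particular, on the open disc $D_k = \{|z-(4k+2)|<1\} \subset G_k$ each of $f, g, h$ takes values in the compact subdisc $\{|w-2|<1/3\} \subset D_0$, and analogously for the discs $D_k^{-} \subset B_k$.

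Now for every composition $F$ built from one, two, or three factors chosen from $\{f,g,h\}$, the forward invariance $F(D_0) \subset \{|w-2|<1/3\} \subset D_0$ follows by iterating the one-step inclusion, so Montel's theorem places $D_0$ in an invariant Fatou component of $F(F)$. The same telescoping gives $F(D_k) \subset D_0$ and $F(D_k^{-}) \subset D_0$ for every $k \geq 1$, so the Fatou component of $F$ containing $D_k$ (resp.\ $D_k^{-}$) maps after one iteration into the invariant component containing $D_0$, making it pre-periodic. Letting $k$ range over $\mathbb{N}$ produces infinitely many pre-periodic domains simultaneously for each of the fifteen listed Fatou sets.

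The main obstacle, as in \cite{singh, kum8}, is ensuring that different $D_k$ lie in genuinely distinct Fatou components, so that one really obtains \emph{infinitely many} pre-periodic domains rather than a single giant one. This is forced by the rapid growth of $\phi_f, \phi_g, \phi_h$ along the separating lines $L_k, M_k$ and the attached vertical rays: after Nersesjan approximation, $|F|$ is large on a neighborhood of each such line for every composition $F$, which puts a barrier (lying in the fast-escaping part of the dynamical plane) between $D_k$ and $D_{k\pm 1}$. The argument is the three-function analogue of the separation argument behind Lemma \ref{ne1}, extended to $f\circ g\circ h$ and its permutations by exactly the same compositional bookkeeping that moves Lemma \ref{ne2} to Lemma \ref{ne4}.
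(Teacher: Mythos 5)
Your core construction is sound and is essentially the same strategy that the paper relies on: note that the paper itself gives no proof of Lemma \ref{ne6} at all, but imports it from the authors' preprint \cite{sub2}, which in turn extends the Carleman-set constructions of \cite{singh} and \cite{kum8} on exactly the set of Example \ref{cs3}. Your use of Theorem \ref{cs1} to produce $f,g,h$ within $1/3$ of maps that are constantly $2$ on all the discs, the telescoping inclusion $F(D_k)\subset\{|w-2|<1/3\}\subset D_0$ for every composition $F$ of one, two or three of $f,g,h$, and the conclusion that the component containing $D_k$ (resp. $D_k^{-}$) maps in one step into the invariant component $V\supset D_0$, are all correct and give each of the infinitely many discs a place in a pre-periodic component of each of the fifteen Fatou sets, in the paper's (inclusive) sense of pre-periodic.

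The genuine gap is the separation step, which you flag as "the main obstacle" but do not actually close. Nersesjan/Carleman approximation controls $f,g,h$ only on $E$ itself; the lines $L_k, M_k$ and the attached rays are nowhere dense, so the claim that "$|F|$ is large on a neighborhood of each such line" does not follow from the approximation. More seriously, largeness of the \emph{first} image of a point of $L_k$ does not place that point in the fast-escaping set or in $J(F)$: once the orbit leaves $E$ the approximation gives no information, so you cannot conclude that the orbit escapes, and a point of $L_k$ could a priori lie in the invariant component $V$ together with all the discs. Without a real barrier argument you cannot exclude that every $D_k$ sits inside the single periodic component $V$, in which case your "infinitely many pre-periodic domains" degenerates to the periodic situation of Lemma \ref{ne5} and loses the intended content of Lemma \ref{ne6} (infinitely many domains in genuinely distinct, strictly pre-periodic components; note that once $U\neq V$ and $U_1=V$, strict pre-periodicity is automatic, so distinctness from $V$ is exactly what must be proved). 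To close this you need the device of the cited constructions: prescribe $\phi_f,\phi_g,\phi_h$ on the lines and half-lines so that their images land back inside $E$ where the approximation applies again (so that orbits of points on the barriers can be tracked and shown to leave every bounded set), or otherwise show directly that the component containing each $D_k$ stays inside the corresponding vertical strip. As written, that step would fail.
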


\begin{lem}\label{ne5}
There are transcendental entire functions $ f $, $ g $ and $ h $ such that there exist  infinite number of domains which lie in the periodic component of the $ F(f),\; F(g), \; F(h), \; F(f\circ g), \; F(g\circ f), \; F(f\circ h), \; F(g\circ h), \; F(h\circ f), \; F(h\circ g), \; F(f\circ g \circ h),\;F(f\circ h \circ g),\;F(g\circ f \circ h),\;F(g\circ h \circ f),\;F(h\circ f \circ g)$ and $F(h\circ g \circ f)$. 
\end{lem}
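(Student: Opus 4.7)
The plan is to follow the Carleman--approximation blueprint already used in Lemmas \ref{ne3}--\ref{ne4} and \ref{ne6}, but to arrange the prescribed behaviour of the three target entire functions on the distinguished disks so that the induced orbits form genuine cycles rather than wandering or pre--periodic trajectories. The cleanest structure to engineer is a $2$-cycle: I would pair the distinguished disks into couples $\{D_{n,1}, D_{n,2}\}$ indexed by $n\in\mathbb{N}$ and prescribe that each of $f,g,h$ swaps the two disks of every pair. Then every composition of odd length in $f,g,h$ interchanges the two disks of each pair, every composition of even length fixes each disk, and therefore all fifteen functions listed in the statement acquire infinitely many periodic Fatou components simultaneously.

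More concretely, first I would build a closed set $E\subset \mathbb{C}$ satisfying the three conditions of Theorem \ref{cs1}, modelled on the set of Example \ref{cs3}, whose interior contains a sequence of pairwise disjoint closed disks $\overline{D_{n,1}}, \overline{D_{n,2}}$ with centres $a_{n,1}, a_{n,2}$ tending to infinity, together with vertical corridors of the type $L_{k}$ and $M_{k}$ of Example \ref{cs3} separating consecutive pairs. Then I would define three continuous functions $\phi,\psi,\chi$ on $E$, analytic on the interior $E^{\circ}$, each equal to the constant $a_{n,2}$ on $\overline{D_{n,1}}$ and to the constant $a_{n,1}$ on $\overline{D_{n,2}}$, but prescribed pairwise differently on the corridors (for instance, by choosing three distinct fast--growing linear functions there) so that the resulting entire approximants are genuinely different transcendental functions rather than perturbations of a single function.

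Next I would invoke Nersesjan's theorem (Theorem \ref{cs1}) with a positive continuous weight $\varepsilon(z)$ small enough that on each $\overline{D_{n,i}}$ the approximant lies inside $D_{n,3-i}$, thereby producing transcendental entire functions $f,g,h$ with $|f-\phi|, |g-\psi|, |h-\chi| < \varepsilon$ on $E$. Since each of $f,g,h$ maps $\overline{D_{n,1}}$ into $D_{n,2}$ and $\overline{D_{n,2}}$ into $D_{n,1}$, the iterates of any composition $\Phi$ built from $f,g,h$ omit a vast portion of $\mathbb{C}$ on each $D_{n,i}$, so Montel's theorem places every $D_{n,i}$ in $F(\Phi)$. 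The length--parity observation of the first paragraph then identifies each $D_{n,i}$ as a member of a periodic cycle of Fatou components of every one of the fifteen listed maps.

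The principal obstacle is verifying that these disks determine genuinely distinct periodic Fatou components of each composite, rather than being absorbed into larger components whose periodicity is not guaranteed. I would handle this exactly as in \cite{singh} and \cite{kum8}: the rapid growth prescribed for $\phi,\psi,\chi$ on the corridors $L_{k}, M_{k}$, together with the uniform Carleman bound, forces Julia--set points of each composite to lie on every corridor separating consecutive pairs, so the disks cannot merge into a single Fatou component of any of the fifteen composites. Once this separation is in place, the conclusion that $F(f), F(g), F(h)$ and each of the twelve two-- and three--fold composite Fatou sets contains infinitely many periodic components follows at once.
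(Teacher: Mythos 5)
Your construction is correct in outline and follows essentially the same route the paper relies on: Lemma \ref{ne5} is not actually proved in this paper but is quoted from \cite{sub3}, and the method there (as in \cite{singh} and \cite{kum8}) is exactly this Carleman-set approximation via Theorem \ref{cs1} on a set modelled on Example \ref{cs3}, with the generators prescribed to be near-constant on the distinguished disks so that Montel's theorem puts the disks in the Fatou sets of all fifteen composites. The only substantive difference is that the standard prescription makes each disk invariant under all three generators (so every composite acquires invariant components at once), whereas you pair the disks into $2$-cycles and argue by length parity; both variants work, and yours has the minor bonus of producing genuine period-two cycles for the odd-length words.
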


\section{Proof of the Proposition \ref{ne}}
From all of above lemmas (Lemmas \ref{ne3}, \ref{ne1}, \ref{ne2}, \ref{ne4}, \ref{ne6} and \ref{ne5}),  we can say that whatever domains that lie in the wandering, pre-periodic or periodic components of $ F(f), F(g)$, $F(h), F(f\circ g),F(g\circ f), F(f\circ h),  F(g\circ h),  F(h\circ f),  F(h\circ g) ,  F(f\circ g \circ h), F(f\circ h \circ g), F(g\circ f \circ h), F(g\circ h \circ f), F(h\circ f \circ g)$ and $F(h\circ g \circ f)$, they also lie respectively in the wandering, pre-periodic or periodic components of their successive composites. In this context, we can also prove the following two results:
\begin{lem}\label{ne7}
If $ D $ is a set which lies in the wandering (or pre-periodic  or periodic) component of $ F(f), F(g),  F(f\circ g)$ and $ F(g\circ f)$, then it  also lies in wandering (or pre-periodic or periodic) component of $ F(f^{n_{k}} \circ g^{n_{k-1}}\circ \ldots \circ g^{n_{1}})$ and  $F(g^{n_{k}} \circ f^{n_{k-1}}\circ \ldots \circ f^{n_{1}})$, where $ n_{k}, \ldots n_{1}\in \mathbb{N} $.
\end{lem}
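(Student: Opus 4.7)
The plan is to lean on the explicit Carleman construction underlying Lemmas \ref{ne3}, \ref{ne2}, and \ref{ne1}. In each of those proofs, the functions $f$ and $g$ are built from the Carleman set of Example \ref{cs3} together with a sequence of pairwise disjoint disks $\{\Delta_j\}_{j\ge 0}$ contained in $F(f)\cap F(g)$, with the structural property $f(\Delta_j)\subset\Delta_{j+1}$ and $g(\Delta_j)\subset\Delta_{j+1}$, and with $D=\Delta_0$. In the wandering setting the disks $\Delta_j$ march off to $\infty$, whereas in the (pre-)periodic setting they eventually cycle through a finite sublist. This mapping structure is the engine that makes the statement plausible in the first place.

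Write $H=f^{n_k}\circ g^{n_{k-1}}\circ\cdots\circ g^{n_1}$ and set $N=n_1+n_2+\cdots+n_k$. A routine induction on $N$, using the inclusions $f(\Delta_j),g(\Delta_j)\subset\Delta_{j+1}$, yields $H(D)\subset\Delta_N$; iterating gives $H^m(D)\subset\Delta_{mN}$ for every $m\ge 1$. In the wandering case, $\Delta_{mN}\to\infty$, so $H^m\to\infty$ locally uniformly on $D$, which forces $\{H^m\}$ to be normal there. In the (pre-)periodic case the images $H^m(D)$ remain in a finite family of bounded disks, so $\{H^m\}$ is locally uniformly bounded on $D$ and Montel's theorem again yields normality. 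Either way $D\subset F(H)$, and the argument for $g^{n_k}\circ f^{n_{k-1}}\circ\cdots\circ f^{n_1}$ is completely symmetric.

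To assign the correct type to the component $U\subset F(H)$ containing $D$, I would observe that $H^m(D)\subset\Delta_{mN}$, and that the disks $\Delta_{mN}$ are pairwise distinct in the wandering case, periodic of some period $q$ in the periodic case, and eventually periodic in the pre-periodic case. This transfers immediately to the sequence $U,H(U),H^2(U),\ldots$ of Fatou components of $F(H)$, which is then wandering, periodic, or pre-periodic accordingly.

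The step I expect to require the most care is verifying, in the wandering case, that distinct disks $\Delta_{mN}$ actually sit in distinct components of $F(H)$, rather than being absorbed into a single large component. This should be handled by the vertical strips $L_k, M_k$ of Example \ref{cs3}, which separate consecutive $\Delta_j$'s: on those strips $|H^m|$ grows arbitrarily large with $m$, so any path joining two disks $\Delta_{mN}$ and $\Delta_{m'N}$ would have to cross points where $\{H^m\}$ fails to be normal, preventing the two disks from lying in a single component of $F(H)$.
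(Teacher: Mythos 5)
Your route is genuinely different from the paper's own proof: the paper argues formally from the identities $F(f)=F(f^{n})$, $F\bigl((f\circ g)^{n}\bigr)=F(f\circ g)$ and never returns to the Carleman construction, whereas you go back to the functions built in Lemmas \ref{ne3}, \ref{ne2} and \ref{ne1} and track arbitrary words through the disk scheme $f(\Delta_j),\,g(\Delta_j)\subset\Delta_{j+1}$. The normality half of your argument is sound: $H^{m}\to\infty$ locally uniformly on $D$ in the wandering case, and local boundedness plus Montel in the (pre-)periodic case, do put $D$ in $F(H)$, and the periodicity/pre-periodicity of the component then follows from $H^{q}(D)$ re-entering a disk already inside that component.

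The genuine gap is in the step you yourself flagged as delicate, and your proposed fix does not work. You argue that distinct disks $\Delta_{mN}$ cannot lie in one component of $F(H)$ because $|H^{m}|$ grows without bound on the separating strips $L_k$, $M_k$, so normality must fail there. But locally uniform divergence to $\infty$ is an admissible limit in the definition of normality used throughout this paper --- indeed $H^{m}\to\infty$ on the disks themselves, which you have just placed in $F(H)$ --- so growth alone never produces Julia points. For the translation $z\mapsto z+4$ every orbit tends to $\infty$ and the Fatou set is all of $\mathbb{C}$, a single component absorbing all the disks; your sketch would not distinguish the constructed $H$ from this example. What actually separates the disks in the constructions of Singh and Kumar--Datt--Kumar is that the approximated map is prescribed differently on the lines $L_k$, $M_k$ of Example \ref{cs3} than on the disks, so that orbits of points on the lines do not travel in step with the disks; it is this mismatch that places the lines in the Julia set and confines each disk's component to its strip, and it must be verified for every word $H=f^{n_k}\circ g^{n_{k-1}}\circ\cdots\circ g^{n_1}$, which you have not done. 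A secondary caveat: your structural premise ($D=\Delta_0$, $f(\Delta_j),g(\Delta_j)\subset\Delta_{j+1}$, disks eventually cycling in the (pre-)periodic case) is a reconstruction of the cited constructions rather than something quoted from them, and it needs to be checked against those papers, especially in the pre-periodic and periodic settings of Lemma \ref{ne1}, where the geometry is not the one you describe for the wandering case.
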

 \begin{proof}
 By the lemmas \ref{ne3}, \ref{ne2} and \ref{ne1}, such a set $ D $ exists. Since $ F(f) = F(f^{n}) $ and $ F(g) = F(g^{n}) $ for any $ n \in \mathbb{N} $. So $ D $ lies in the wandering (or pre-periodic or periodic) component of  $F(f^{n}) $ and $  F(g^{n}) $ for all $ n \in \mathbb{N} $.   As $ D $ lies in the  wandering (or pre-periodic or periodic) component of $F(f\circ g)$, it also lies in the wandering (or pre-periodic or periodic) component of $F(f^{n}\circ g^{n})$ for all $ n \in \mathbb{N} $. By the same argument we are using here, $ D $ also lies in the wandering (or pre-periodic or periodic) component of $F(f\circ g)^{n}$ for all $ n \in \mathbb{N} $. Since $F(f\circ g)^{n} = F(f \circ g \circ \ldots  \circ f \circ g)$ (n -times $ f \circ g $), $ D $ lies in the wandering (or pre-periodic or periodic) component of $F(f^{n}\circ g^{n} \circ \ldots  \circ f^{n} \circ g^{n})$(n-times $ f^{n} \circ g^{n} $) for all $ n \in \mathbb{N} $. Since $ n \in \mathbb{N} $ is arbitrary, so we conclude that $ D $ lies in the wandering (or pre-periodic or periodic) component of $ F(f^{n_{k}} \circ g^{n_{k-1}}\circ \ldots \circ g^{n_{1}})$ for all $ n_{k}, \ldots n_{1}\in \mathbb{N}$. Similarly, we can show that  $ D $ lies in the wandering (or pre-periodic or periodic) component of $F(g^{n_{k}} \circ f^{n_{k-1}}\circ \ldots \circ f^{n_{1}})$ for all $ n_{k}, \ldots n_{1}\in \mathbb{N} $.
 \end{proof}
 
 \begin{lem}\label{ne8}
If $ D $ is a set which lies in the wandering (or pre-periodic or periodic) component of $ F(f), F(g),  F(h)$, $F(f\circ g)$, $F(g\circ f), \; F(f\circ h), \; F(g\circ h), \; F(h\circ f), \; F(h\circ g) , \; F(f\circ g \circ h),\;F(f\circ h \circ g),\;F(g\circ f \circ h),\;F(g\circ h \circ f),\;F(h\circ f \circ g)$ and $F(h\circ g \circ f)$,  then it also lies in the  wandering (pre-periodic or periodic) component of $ F(f^{n_{k}} \circ g^{n_{k-1}}\circ h^{n_{k-2}} \ldots \circ f^{n_{1}})$,  $F(g^{n_{k}} \circ f^{n_{k-1}} \circ h^{n_{k-2}}\circ \ldots \circ g^{n_{1}})$ and $F(h^{n_{k}} \circ f^{n_{k-1}} \circ g^{n_{k-2}}\circ \ldots \circ h^{n_{1}})$ etc.
\end{lem}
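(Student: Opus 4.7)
The plan is to mirror the proof of Lemma \ref{ne7}, now with three generators in place of two, and with correspondingly more pairwise and triple composites to keep track of.

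First I would invoke Lemmas \ref{ne4}, \ref{ne6}, and \ref{ne5} to secure the existence of such a set $D$ in the wandering, pre-periodic, and periodic cases respectively, so the hypothesis is non-vacuous. Since $F(f) = F(f^n)$, $F(g) = F(g^n)$, and $F(h) = F(h^n)$ for every $n \in \mathbb{N}$, $D$ automatically lies in the appropriate type of component of each of $F(f^n)$, $F(g^n)$, $F(h^n)$.

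Next I would replay the key step of Lemma \ref{ne7} for each of the six ordered pairs drawn from $\{f, g, h\}$: since $D$ lies in the relevant component of $F(f\circ g)$, $F(g\circ f)$, $F(f\circ h)$, $F(h\circ f)$, $F(g\circ h)$, $F(h\circ g)$, the same reasoning used there places $D$ in the relevant component of $F((f\circ g)^n)$, and hence of every two-letter alternating composite $F(f^{n_k}\circ g^{n_{k-1}}\circ\ldots\circ g^{n_1})$ together with its five analogues, for arbitrary exponents in $\mathbb{N}$. Repeating the argument for each of the six cyclic triples $F(f\circ g\circ h)$, $F(f\circ h\circ g)$, \ldots, $F(h\circ g\circ f)$ yields $D$ in the relevant component of every cyclic three-letter composite of the form $F(f^{n_k}\circ g^{n_{k-1}}\circ h^{n_{k-2}}\circ\ldots)$ and its analogues. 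A fully general composite $f^{n_k}\circ g^{n_{k-1}}\circ h^{n_{k-2}}\circ\ldots\circ f^{n_1}$, in which the three letters appear in any order, is then assembled by concatenating blocks drawn from the six pair and six triple base composites.

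The main obstacle is the combinatorial bookkeeping: unlike the two-generator setting of Lemma \ref{ne7}, where the only possible pattern is strict alternation, the three-letter alphabet admits arbitrarily irregular words, so the argument must exhibit every such word as a concatenation of blocks from the six pair and six triple base cases with freely chosen exponents. The subtler issue, already implicit in Lemma \ref{ne7}, is the replacement step that promotes a single letter to an arbitrary power of itself inside a deeper composite; this is legitimate here because the functions $f,g,h$ furnished by Lemmas \ref{ne4}--\ref{ne5} are obtained from the concrete Carleman-set construction of Example \ref{cs3}, which places $D$ in a common normal neighborhood under every composite that can be built from $f^{n_i}$, $g^{n_j}$, $h^{n_l}$.
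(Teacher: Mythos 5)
Your proposal follows essentially the same route as the paper: the paper's own proof of this lemma is only the two-line remark that such a set $D$ exists by Lemmas \ref{ne4}, \ref{ne6} and \ref{ne5}, and that the conclusion follows ``by the similar argument of Lemma \ref{ne7}.'' Your write-up simply spells out that argument with three generators (including the block decomposition of arbitrary words and the exponent-promotion step that Lemma \ref{ne7} itself glosses over), so it is a more detailed rendering of the same approach rather than a different one.
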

 \begin{proof}
 By lemmas  \ref{ne4}, \ref{ne6}, \ref{ne5}, such a set $ D $ exists. By the similar argument of above lemma \ref{ne7}, the proof of this lemma follows.
 \end{proof}
 
 We prove the proposition \ref{ne} for a semigroup generated by two or three transcendental entire functions as defined in above lemmas \ref{ne3}, \ref{ne2}, \ref{ne1},  \ref{ne4}, \ref{ne6} and \ref{ne5}.

\begin{proof}[Proof of the Proposition \ref{ne}]
Let $ S $ be a transcendental semigroup generated by two or three transcendental entire functions. If  $ S $ is generated by two transcendental entire functions $ f $ and $ g $ as defined in the lemmas \ref{ne3},  \ref{ne2} and \ref{ne1}, then by the lemma \ref{ne7}, there is at least a domain which lies in the wandering (or pre-periodic or periodic) component  of the  $ F(f^{n_{k}} \circ g^{n_{k-1}}\circ \ldots \circ g^{n_{1}})$ and  $F(g^{n_{k}} \circ f^{n_{k-1}}\circ \ldots \circ f^{n_{1}})$ for all $ n_{k}, \ldots n_{1}\in \mathbb{N} $. By the definition of transcendental semigroup, any $ h \in S = \langle f, g \rangle $ can be written in either of the form $ h = f^{n_{k}} \circ g^{n_{k -1}} \circ \ldots \circ g^{n_{1}} $ or $ h = g^{n_{k}} \circ f^{n_{k -1}} \circ \ldots \circ f^{n_{1}} $ for all $ n_{k}, \ldots n_{1}\in \mathbb{N} $. Therefore, there is a domain $ D $ which lies in the wandering (or pre-periodic or periodic) component  of the Fatou set $ F(h) $ for every function $ h $ of transcendental semigroup $ S $. This shows that this domain lies in the wandering (or pre-periodic or periodic) component  of the Fatou set $ F(S) $. Since for transcendental entire function,  pre-peridic (or periodic) domains are simply connected and so a domain within simply connected domains is also simply connected.  In the construction of functions  in the lemmas \ref{ne3},  \ref{ne2} and \ref{ne1}, the domain which lies in the wandering domains is simply connected. If  $ S $ is generated by three transcendental entire functions $ f $, $ g $ and $ h $ as defined in the lemmas \ref{ne4},  \ref{ne6} and \ref{ne5}, then by lemma \ref{ne8} and similar argument as above,  Fatou set $ F(S) $ contains  simply connected domain. 
\end{proof}

We restricted our proof of the proposition \ref{ne} to the transcendental semigroup generated by  two or three transcendental entire functions . Rigorously, it is not known that the  essence of this proposition holds if a semigroup is generated by more than three transcendental entire functions. We can only say intuitively, the essence of this proposition  may hold if  semigroup $ S $ is generated by n- transcendental entire functions. There is another strong aspect of this proposition which is- if a transcendental semigroup  $ S $ generated by such type of two or three transcendental entire functions, then the Fatou set $ F(S) $ is non-empty.

\end{document}